\def\correspondingauthor{\footnote{Corresponding author.}}
\newtheorem {theorem}{Theorem}
\newtheorem {corollary}{Corollary}
\newtheorem {definition}{Definition}
\newtheorem {remark}{Remark}
\numberwithin{equation}{section}
\begin{document}
\title{A $q$-analogue of $ \bar{\boldsymbol{\alpha}}$-Whitney Numbers}
\author[1]{B. S. El-Desouky}
\author[1]{F. A. Shiha\correspondingauthor{}}
\affil[1]{ Department of Mathematics, Faculty of Science, Mansoura University, 35516 Mansoura, Egypt}

\maketitle
\begin{abstract}
We define the $(q,\bar{\boldsymbol{\alpha}})$-Whitney numbers which are reduced to the $\bar{\boldsymbol{\alpha}}$-Whitney
numbers when $q\rightarrow1$. Moreover, we obtain several  properties of these numbers such as explicit formulas, recurrence relations, generating functions, orthogonality and inverse relations. Finally, we define the $\bar{\boldsymbol{\alpha}}$-Whitney-Lah
numbers as a generalization of the $r$-Whitney-Lah numbers and we introduce their important basic properties. \\

\noindent \emph{Keywords:} $(q,\bar{\boldsymbol{\alpha}})$-Whitney numbers, $\bar{\boldsymbol{\alpha}}$-Whitney numbers, $r$-Whitney numbers, $q$-Stirling numbers; $\bar{\boldsymbol{\alpha}}$-Whitney-Lah numbers.\\

\noindent 2010 \emph{Mathematics Subject Classification:}  Primary 05A19 ; Secondary 05A15, 05E05.
\end{abstract}
\section {Introduction}
El-Desouky et al. \cite{beih16} introduced the $\bar{\boldsymbol{\alpha}}$-Whitney numbers of both kinds  as a new family of numbers generalizing many types of numbers such as $r$-Whitney numbers, Whitney numbers, $r$-Stirling numbers, Jacobi-Stirling numbers and Legendre-Stirling numbers.

The $\bar{\boldsymbol{\alpha}}$-Whitney numbers of the first kind $ w_{m,\bar{\boldsymbol{\alpha}}}(n,k)$ and  second kind $W_{m,\bar{\boldsymbol{\alpha}}}(n,k)$ are defined by
\begin{equation}\label{E:wit1}
(x;\bar{\boldsymbol{\alpha}}|m)_n=\sum_{k=0}^nw_{m,\bar{\boldsymbol{\alpha}}}(n,k) x^k,
\end{equation}
and
\begin{equation}
   x^n= \sum_{k=0}^{n} W_{m,\bar{\boldsymbol{\alpha}}}(n,k)(x;\bar{\boldsymbol{\alpha}}|m)_k,
\end{equation}
where $\bar{\boldsymbol{\alpha}}=(\alpha_0 ,\alpha_1 , \ldots , \alpha_{n-1} ) $, and
\[
(x;\bar{\boldsymbol{\alpha}}|m)_n=\prod_{j=0}^{n-1}(x-\alpha_j-jm)\: \: \text{with}\: (x;\bar{\boldsymbol{\alpha}}|m)_0=1.
\]
The $\bar{\boldsymbol{\alpha}}$-Whitney numbers of the first and second kind satisfying recurrence relations of the form:
\begin{equation*}
 w_{m,\bar{\boldsymbol{\alpha}}}(n+1,k)=w_{m,\bar{\boldsymbol{\alpha}}}(n,k-1)-(\alpha_n+nm)w_{m,\bar{\boldsymbol{\alpha}}}(n,k),
 \end{equation*}
  \begin{equation*}
 W_{m,\bar{\boldsymbol{\alpha}}}(n+1,k)=W_{m,\bar{\boldsymbol{\alpha}}}(n,k-1)+(\alpha_k+km)W_{m,\bar{\boldsymbol{\alpha}}}(n,k).
 \end{equation*}
 Note that the $\bar{\boldsymbol{\alpha}}$-Whitney numbers coincide with the $r$-Whitney numbers and Whitney numbers by setting $\bar{\boldsymbol{\alpha}}=(r, r, \ldots, r)$ and $\bar{\boldsymbol{\alpha}}=(1,1,\ldots,1)$, respectively.
 Many properties of the $\bar{\boldsymbol{\alpha}}$-Whitney numbers, $r$-Whitney numbers and Whitney numbers can be found in  \cite{beih16, cheon12, ben96, mango15, merca14, mezo2010, mezo15}.

The organization of this article is as follows. In the next two sections, we define the $q$-analogue of the $\bar{\boldsymbol{\alpha}}$-Whitney
numbers of the first and second kind denoted by $w_{q, m,\bar{\boldsymbol{\alpha}}}(n,k)$ and $W_{q, m,\bar{\boldsymbol{\alpha}}}(n,k)$, respectively, and obtain their recurrence relations, explicit formulas and generating functions. In the third section, we obtain the orthogonality property of the both kinds of the $(q,\bar{\boldsymbol{\alpha}})$-Whitney numbers which yields to the inverse relations. Moreover we give some important special cases. In the fourth section, we define the $\bar{\boldsymbol{\alpha}}$-Whitney-Lah numbers and deduce its recurrence relation, explicit formula and matrix representation.

Let $ 0<q<1$, $x$ a real number, $ [x]_q=1+q+\cdots+q^{x-1}$ and $[0]_q=0$. The $q$-factorial of $x$ is defined by $[x]_q!=[x]_q[x-1]_q\cdots [2]_q[1]_q$, and the $q$-falling factorial of order $n$ is defined by
\[
 ([x]_q)_n=\prod_{j=0}^{n-1}[x-j]_q \:\text{and}\: ([x]_q)_o=1.
\]

Moreover, the following definitions and notation are introduced.
\[
([x;\bar{\boldsymbol{\alpha}}]_q)_n=\prod_{j=0}^{n-1}[x-\alpha_j]_q=[x-\alpha_0]_q[x-\alpha_1]_q\cdots[x-\alpha_{n-1}]_q \: \text{with}\: ([x;\bar{\boldsymbol{\alpha}}]_q)_0=1,
\]
\[
([x;\bar{\boldsymbol{\alpha}}|m]_q)_n=\prod_{j=0}^{n-1}[x-\alpha_j-jm]_q, \: \text{with} \: ([x;\bar{\boldsymbol{\alpha}}|m]_q)_0=1,
\]
and
\[
\big\langle[x;\bar{\boldsymbol{\alpha}}|m]_q \big \rangle_n=\prod_{j=0}^{n-1}([x]_q-[\alpha_j+jm]_q), \: \text{with}\: \big \langle[x;\bar{\boldsymbol{\alpha}}|m]_q \big \rangle_0=1.
\]
\section {The $(q,\bar{\boldsymbol{\alpha}})$-Whitney numbers of the first kind }
\begin{definition}\label{D:wi1}
The $(q,\bar{\boldsymbol{\alpha}})$-Whitney numbers of the first kind $ w_{q,m,\bar{\boldsymbol{\alpha}}}(n,k)$ are defined by
\begin{equation}\label{E:wi1}
\big\langle[x;\bar{\boldsymbol{\alpha}}|m]_q \big \rangle_n=\sum_{k=0}^nw_{q,m,\bar{\boldsymbol{\alpha}}}(n,k) [x]^k_q,
\end{equation}
where $ w_{q,m,\bar{\boldsymbol{\alpha}}}(0,0)=1$ and $ w_{q,m,\bar{\boldsymbol{\alpha}}}(n,k)=0$ for $ k>n$ or $k<0$.
\end{definition}
Since for the q-numbers we have $[x-y]_q=q^{-y}([x]_q-[y]_q)$. Then
\[
([x;\bar{\boldsymbol{\alpha}}|m]_q)_n=q^{-\sum_{i=0}^{n-1}\alpha_i+im}\prod_{j=0}^{n-1}([x]_q-[\alpha_j+jm]_q).
\]
Thus Eq.~(\ref{E:wi1}) in Definition \ref{D:wi1} can be written in the equivalent form
\[
([x;\bar{\boldsymbol{\alpha}}|m]_q)_n= q^{-\sum_{i=0}^{n-1}\alpha_i+im}\sum_{k=0}^nw_{q,m,\bar{\boldsymbol{\alpha}}}(n,k) [x]^k_q.
\]
In particular, note that $w_{q,m,\bar{\boldsymbol{\alpha}}}(n,k)$ is reduced to the  $w_{m,\bar{\boldsymbol{\alpha}}}(n,k)$ when $q\rightarrow1$.
\begin{theorem}
The $(q,\bar{\boldsymbol{\alpha}})$-Whitney numbers of the first kind satisfy the recurrence relation
 \begin{equation} \label{E:rec1}
 w_{q,m,\bar{\boldsymbol{\alpha}}}(n+1,k)=w_{q,m,\bar{\boldsymbol{\alpha}}}(n,k-1)-[\alpha_n+nm]_qw_{q,m,\bar{\boldsymbol{\alpha}}}(n,k),
 \end{equation}
 where $n\geq k\geq 1$, and
 \begin{equation} \label{E:rec12}
 w_{q,m,\bar{\boldsymbol{\alpha}}}(n,0)=(-1)^{n}\prod_{i=0}^{n-1}[\alpha_i+im]_q.
 \end{equation}
\end{theorem}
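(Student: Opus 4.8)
The plan is to read both claims directly off the defining identity $\big\langle[x;\bar{\boldsymbol{\alpha}}|m]_q \big\rangle_n=\sum_{k=0}^n w_{q,m,\bar{\boldsymbol{\alpha}}}(n,k)[x]_q^k$ together with the obvious product recursion $\big\langle[x;\bar{\boldsymbol{\alpha}}|m]_q \big\rangle_{n+1}=\big\langle[x;\bar{\boldsymbol{\alpha}}|m]_q \big\rangle_n\cdot([x]_q-[\alpha_n+nm]_q)$, which is immediate from the definition $\big\langle[x;\bar{\boldsymbol{\alpha}}|m]_q \big\rangle_n=\prod_{j=0}^{n-1}([x]_q-[\alpha_j+jm]_q)$.

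First I would substitute the expansion \eqref{E:wi1} into both sides of this product recursion:
\begin{equation*}
\sum_{k=0}^{n+1} w_{q,m,\bar{\boldsymbol{\alpha}}}(n+1,k)[x]_q^k=\Big(\sum_{j=0}^{n} w_{q,m,\bar{\boldsymbol{\alpha}}}(n,j)[x]_q^j\Big)\big([x]_q-[\alpha_n+nm]_q\big).
\end{equation*}
Expanding the right-hand side gives $\sum_j w_{q,m,\bar{\boldsymbol{\alpha}}}(n,j)[x]_q^{j+1}-[\alpha_n+nm]_q\sum_j w_{q,m,\bar{\boldsymbol{\alpha}}}(n,j)[x]_q^{j}$; after re-indexing the first sum and treating $[x]_q$ as a formal indeterminate (valid since the identity holds for all real $x$ and the powers $[x]_q^k$ are linearly independent as functions of $x$, or alternatively one works in the polynomial ring in the variable $[x]_q$), I compare the coefficient of $[x]_q^k$ on both sides. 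This yields exactly \eqref{E:rec1} for $1\le k\le n$, and also covers the boundary via the convention $w_{q,m,\bar{\boldsymbol{\alpha}}}(n,-1)=0$.

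For \eqref{E:rec12} I would proceed by induction on $n$: the case $n=0$ is the normalization $w_{q,m,\bar{\boldsymbol{\alpha}}}(0,0)=1$ (empty product), and setting $k=0$ in \eqref{E:rec1} gives $w_{q,m,\bar{\boldsymbol{\alpha}}}(n+1,0)=-[\alpha_n+nm]_q\,w_{q,m,\bar{\boldsymbol{\alpha}}}(n,0)$ since $w_{q,m,\bar{\boldsymbol{\alpha}}}(n,-1)=0$; the inductive hypothesis then immediately propagates the closed form with the extra factor $-[\alpha_n+nm]_q$, producing $(-1)^{n+1}\prod_{i=0}^{n}[\alpha_i+im]_q$. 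Alternatively, and perhaps more transparently, \eqref{E:rec12} is just the constant term ($[x]_q=0$) of \eqref{E:wi1}, i.e. $\big\langle[x;\bar{\boldsymbol{\alpha}}|m]_q\big\rangle_n$ evaluated at $[x]_q=0$, which is $\prod_{j=0}^{n-1}(-[\alpha_j+jm]_q)$.

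I do not anticipate a genuine obstacle here — the argument is a routine coefficient comparison. The only point requiring a word of care is the justification that one may equate coefficients of $[x]_q^k$: one should note that the defining relation is an identity of polynomial functions in $[x]_q$ (equivalently, that $1,[x]_q,[x]_q^2,\dots$ are linearly independent, which follows because $[x]_q=(q^x-1)/(q-1)$ takes infinitely many distinct values), so the two polynomial expressions agree coefficientwise. Everything else is bookkeeping with the index shift and the boundary conventions declared in Definition~\ref{D:wi1}.
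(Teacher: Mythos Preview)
Your proposal is correct and follows essentially the same route as the paper: use the product recursion $\big\langle[x;\bar{\boldsymbol{\alpha}}|m]_q\big\rangle_{n+1}=\big\langle[x;\bar{\boldsymbol{\alpha}}|m]_q\big\rangle_n\,([x]_q-[\alpha_n+nm]_q)$, substitute the defining expansion, shift the index, and equate coefficients of $[x]_q^k$; then iterate the $k=0$ case to obtain \eqref{E:rec12}. Your extra remarks on why coefficient comparison is legitimate and the alternative ``evaluate at $[x]_q=0$'' argument for \eqref{E:rec12} are helpful additions but do not change the underlying method.
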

\begin{proof}Since $\big\langle[x;\bar{\boldsymbol{\alpha}}|m]_q \big\rangle_{n+1}=\big\langle[x;\bar{\boldsymbol{\alpha}}|m]_q\big\rangle_{n}\:([x]_q-[\alpha_n+nm]_q)$.

Using Eq.~(\ref{E:wi1}), we get
\begin{equation*}
\begin{split}
&\sum_{k=0}^{n+1} w_{q,m,\bar{\boldsymbol{\alpha}}}(n+1,k)[x]^k_q = \big\langle[x;\bar{\boldsymbol{\alpha}}|m]_q \big\rangle_n\:([x]_q-[\alpha_n+nm]_q)\\
&=\sum_{k=0}^n w_{q,m,\bar{\boldsymbol{\alpha}}}(n,k)[x]_q^k\:([x]_q-[\alpha_n+nm]_q)\\
&=\sum_{k=0}^n w_{q,m,\bar{\boldsymbol{\alpha}}}(n,k)[x]_q^{k+1}-[\alpha_n+nm]_q \sum_{k=0}^n w_{q,m,\bar{\boldsymbol{\alpha}}}(n,k)[x]_q^k\\
&=\sum_{k=1}^{n+1} w_{q,m,\bar{\boldsymbol{\alpha}}}(n,k-1)[x]_q^k-[\alpha_n+nm]_q\sum_{k=0}^n w_{q,m,\bar{\boldsymbol{\alpha}}}(n,k) [x]_q^k.
\end{split}
\end{equation*}
Equating the coefficients of $[x]_q^k$ on both sides yields \eqref{E:rec1}.

For $k=0$, we find
 \[
  w_{q,m,\bar{\boldsymbol{\alpha}}}(n+1,0)= - [\alpha_n+nm]_q\,w_{q,m,\bar{\boldsymbol{\alpha}}}(n,0) \: ,\: n=0 , 1, 2, \ldots,
 \]
 successive application gives \eqref{E:rec12}.
 \end{proof}
  \begin {definition} The $(q,\bar{\boldsymbol{\alpha}})$-Whitney matrix of the first kind is the $n\times n$ lower triangular matrix defined by
   \[
 \mathbf{W}_{1}:= \mathbf{w}_{q,m,\bar{\boldsymbol{\alpha}}}(n):=\big(w_{q,m,\bar{\boldsymbol{\alpha}}}(i,j)\big)_{0 \leq i,j\leq n-1}.
   \]
   \end{definition}
  For example when $n=4$ the matrix $ \mathbf{w}_{q,m,\bar{\boldsymbol{\alpha}}}(n)$ is given by
 \[
 \left(
 \begin{smallmatrix}
 1 &0 &0 &0\\ \\
- [\alpha_0]_q & 1 &0 &0 \\ \\
 [\alpha_0]_q[\alpha_1+m]_q &-[\alpha_0]_q-[\alpha_1+m]_q & 1&0  \\ \\
-[\alpha_0]_q[\alpha_1+m]_q[\alpha_2+2m]_q& [\alpha_0]_q [\alpha_1+m]_q +[ \alpha_0+\alpha_1+ m]_q[\alpha_2+2m]_q &-[\alpha_0]_q-[\alpha_1+m]_q-[\alpha_2+2m]_q&1\\
\end{smallmatrix}
\right)
\]
In particular, we note that $\mathbf{w}_{q,m,\bar{\boldsymbol{\alpha}}}(n)$ is reduced to the $\bar{\boldsymbol{\alpha}}$-Whiyney matrix of the first kind
 \cite{beih16} when $q\rightarrow 1$. In addition at $q\rightarrow 1$ and $\bar{\boldsymbol{\alpha}}=(r, r, \ldots, r)$ the $\mathbf{w}_{q,m,\bar{\boldsymbol{\alpha}}}(n)$ is reduced to the $r$-Whitney matrix of the first kind \cite{mezo15}.

Mansour et al. \cite{mansour12} derived a closed formula for all sequences satisfying
 a certain recurrence relation as follows:
\begin{theorem} \label{th:mans}\textnormal{ \cite[Theorem \ 1.1]{mansour12}}.  Suppose $(a_i)_{i\geq 0}$ and $(b_i)_{i\geq 0}$ are sequences of numbers with  $b_i\neq b_j$ when $i \neq j$ and
\begin{equation}
u(n,k)=u(n-1,k-1)+(a_{n-1}+b_k)u(n-1,k),
\end{equation}
with boundary conditions $u(n,0)=\prod_{i=0}^{n-1}(a_i+b_0)$ and $u(0,k)=\delta_{0k}$, where $\delta_{jk}$ is the Kronecker delta function, then
\begin{equation} \label{E:mex}
u(n,k)=\sum_{j=0}^k \left( \frac{\prod_{i=0}^{n-1}(b_j+a_i)}{\prod_{i=0 ,i\ne j}^k(b_j-b_i)}\right),\: \:\:\forall\: n,k \in \mathbb{N}.
\end{equation}
\end{theorem}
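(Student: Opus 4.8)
The plan is to prove \eqref{E:mex} by induction on $n$: denoting the right-hand side by $v(n,k)$, I will show that $v$ obeys the same two boundary conditions and the same recurrence as $u$, whence $u(n,k)=v(n,k)$.

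First I would dispose of the boundary data. When $k=0$ the denominator product $\prod_{i=0,\,i\ne j}^{0}(b_j-b_i)$ is empty, so $v(n,0)=\prod_{i=0}^{n-1}(a_i+b_0)$, exactly the prescribed value of $u(n,0)$. When $n=0$ the numerator product $\prod_{i=0}^{-1}(b_j+a_i)$ is empty, so $v(0,k)=\sum_{j=0}^{k}\bigl(\prod_{i=0,\,i\ne j}^{k}(b_j-b_i)\bigr)^{-1}$, which is $1$ for $k=0$ and $0$ for $k\ge 1$. The vanishing is the classical partial-fraction identity: from $\prod_{i=0}^{k}(x-b_i)^{-1}=\sum_{j=0}^{k}\bigl(\prod_{i\ne j}(b_j-b_i)\bigr)^{-1}(x-b_j)^{-1}$, multiply by $x$ and let $x\to\infty$; the left side is a rational function of degree $-k\le -1$ and tends to $0$, while the right side tends to $\sum_{j}\bigl(\prod_{i\ne j}(b_j-b_i)\bigr)^{-1}$. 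Hence $v(0,k)=\delta_{0k}=u(0,k)$.

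The core step is to check that $v$ satisfies $v(n,k)=v(n-1,k-1)+(a_{n-1}+b_k)\,v(n-1,k)$ for $n,k\ge 1$. Put $P_j=\prod_{i=0}^{n-2}(b_j+a_i)$, so the numerator of the $j$-th term of $v(n,k)$ is $(b_j+a_{n-1})P_j$, and split $b_j+a_{n-1}=(a_{n-1}+b_k)+(b_j-b_k)$. The contribution of $(a_{n-1}+b_k)$ is $(a_{n-1}+b_k)\sum_{j=0}^{k}P_j\bigl(\prod_{i=0,\,i\ne j}^{k}(b_j-b_i)\bigr)^{-1}=(a_{n-1}+b_k)\,v(n-1,k)$. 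In the contribution of $(b_j-b_k)$ the $j=k$ term vanishes, and for $j\le k-1$ the factor $(b_j-b_k)$ cancels the matching factor in $\prod_{i=0,\,i\ne j}^{k}(b_j-b_i)$, leaving $P_j\bigl(\prod_{i=0,\,i\ne j}^{k-1}(b_j-b_i)\bigr)^{-1}$; summing over $j=0,\dots,k-1$ gives $v(n-1,k-1)$. This is the one place where an actual computation is carried out, and the only subtlety is tracking the empty products in the cases $k=1$ and $n=1$.

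Finally I would run the induction on $n$. The base case $v(0,\cdot)=u(0,\cdot)$ is the boundary computation above. For the step, assuming $v(n-1,\cdot)=u(n-1,\cdot)$: when $k\ge 1$, comparing the recurrence just established for $v$ with the hypothesis on $u$ gives $v(n,k)=u(n,k)$, and when $k=0$ equality holds by the boundary computation. Therefore $u(n,k)=v(n,k)$ for all $n,k\in\mathbb{N}$, which is \eqref{E:mex}. I expect no genuine obstacle beyond the routine algebra of the core step (the partial-fraction identity, the $(b_j-b_k)$ cancellation, and the empty-product edge cases); the induction wrapper is purely formal.
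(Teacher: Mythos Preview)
Your proof is correct. Note, however, that the paper does not supply its own proof of this statement: Theorem~\ref{th:mans} is quoted from \cite[Theorem~1.1]{mansour12} and invoked as a black box in the subsequent arguments, so there is nothing in the paper to compare against. Your verification---checking that the closed form satisfies the two boundary conditions (the $n=0$ case via the standard partial-fraction identity for $\prod_i(x-b_i)^{-1}$) and the recurrence (via the split $b_j+a_{n-1}=(a_{n-1}+b_k)+(b_j-b_k)$, which peels off the factor $(b_j-b_k)$ from the denominator), followed by induction on $n$---is the natural direct argument and is carried out cleanly.
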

\begin{remark} \label{rem:mans} \textnormal{\cite[p.\ 25]{mansour12}}
\begin{enumerate}
\item The recurrence for $u(n,k)$ is given by
\begin{equation}\label{E:mrec}
u(n,k)=\sum_{j=k}^n u(j-1,k-1)\prod_{i=j}^{n-1}(a_i+b_k).
\end{equation}
\item In the case when $b_i=0$ for all $i$, then $u(n,k)$ is the $(n-k)$th elementary symmetric function of $a_0,\;a_1,\;\ldots,\;a_{n-1}$.
The elementary symmetric function $\sigma_k$ is defined by
 \[
 \sigma_k(z_1 , z_2 , \ldots , z_n)=\sum_{1 \leq j_1 < j_2 < \cdots < j_k \leq n}\: \prod_{i=1}^k z_{j_i},
 \]
 where $\sigma_0 =1$ and $\sigma_k=0$ when $n<k$ or $k<0$.
 \end{enumerate}
\end{remark}
\begin{theorem}
The $(q,\bar{\boldsymbol{\alpha}})$-Whitney numbers of the first kind are given by
\begin{equation}\label{E:ew1}
\begin{split}
w_{q,m,\bar{\boldsymbol{\alpha}}}(n,k) &=(-1)^{n-k}\: \sigma_{n-k}([\alpha_0]_q,\:[\alpha_1+m]_q,\:\ldots,\:[\alpha_{n-1}+(n-1)m]_q)\\
 &=(-1)^{n-k}\: \sum_{ 0 \leq j_1 < \cdots <j_{n-k} \leq n-1}\: \prod_{i=1}^{n-k}[\alpha_{j_i}+j_i\:m]_q.\\
\end{split}
 \end{equation}
and the following recurrence relation holds:
\begin{equation}\label{E:rrec1}
w_{q,m,\bar{\boldsymbol{\alpha}}}(n,k)= (-1)^{n-k}\sum_{j=k}^{n} w_{q,m,\bar{\boldsymbol{\alpha}}}(j-1,k-1) \prod_{i=j}^{n-1}[\alpha_i+im]_q.
\end{equation}
\end{theorem}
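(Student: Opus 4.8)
The plan is to read off both parts of the statement from Theorem~\ref{th:mans} and Remark~\ref{rem:mans} after a suitable specialization. First I would rewrite the recurrence \eqref{E:rec1} with $n$ in place of $n+1$, namely $w_{q,m,\bar{\boldsymbol{\alpha}}}(n,k)=w_{q,m,\bar{\boldsymbol{\alpha}}}(n-1,k-1)-[\alpha_{n-1}+(n-1)m]_q\,w_{q,m,\bar{\boldsymbol{\alpha}}}(n-1,k)$, and compare it with $u(n,k)=u(n-1,k-1)+(a_{n-1}+b_k)u(n-1,k)$. The obvious choice is $a_i=-[\alpha_i+im]_q$ and $b_i=0$ for every $i$. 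With this choice the boundary value $u(n,0)=\prod_{i=0}^{n-1}(a_i+b_0)=\prod_{i=0}^{n-1}\big(-[\alpha_i+im]_q\big)=(-1)^n\prod_{i=0}^{n-1}[\alpha_i+im]_q$ is exactly \eqref{E:rec12}, while $u(0,k)=\delta_{0k}$ matches the initial conditions in Definition~\ref{D:wi1}; hence $u(n,k)=w_{q,m,\bar{\boldsymbol{\alpha}}}(n,k)$ for all $n,k$.

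Since all the $b_i$ coincide here, the hypothesis $b_i\neq b_j$ of Theorem~\ref{th:mans} fails, so \eqref{E:mex} is not directly available; instead I would invoke the degenerate case recorded in Remark~\ref{rem:mans}(2), which gives $w_{q,m,\bar{\boldsymbol{\alpha}}}(n,k)=\sigma_{n-k}(a_0,\dots,a_{n-1})=\sigma_{n-k}\big(-[\alpha_0]_q,\dots,-[\alpha_{n-1}+(n-1)m]_q\big)$. Because $\sigma_{n-k}$ is homogeneous of degree $n-k$, every monomial contributes a common factor $(-1)^{n-k}$, which produces the first line of \eqref{E:ew1}; unfolding the definition of $\sigma_{n-k}$ from Remark~\ref{rem:mans}(2) then gives the second line. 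For the recurrence \eqref{E:rrec1} I would feed the same data into \eqref{E:mrec}, which becomes $w_{q,m,\bar{\boldsymbol{\alpha}}}(n,k)=\sum_{j=k}^{n}w_{q,m,\bar{\boldsymbol{\alpha}}}(j-1,k-1)\prod_{i=j}^{n-1}\big(-[\alpha_i+im]_q\big)$, and collecting the resulting sign factors out of the product yields \eqref{E:rrec1}.

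As a self-contained alternative (and a check), \eqref{E:ew1} can also be obtained by induction on $n$ directly from \eqref{E:rec1}: the base case $n=0$ is $\sigma_0=1$, and in the inductive step the two terms $w_{q,m,\bar{\boldsymbol{\alpha}}}(n,k-1)$ and $-[\alpha_n+nm]_q\,w_{q,m,\bar{\boldsymbol{\alpha}}}(n,k)$ combine, after factoring out $(-1)^{n+1-k}$, into $\sigma_{n+1-k}(z_1,\dots,z_n)+z_{n+1}\,\sigma_{n-k}(z_1,\dots,z_n)$ with $z_i=[\alpha_{i-1}+(i-1)m]_q$, which equals $\sigma_{n+1-k}(z_1,\dots,z_{n+1})$ by the fundamental recurrence for elementary symmetric functions; similarly \eqref{E:rrec1} follows by iterating \eqref{E:rec1} down to the boundary term $w_{q,m,\bar{\boldsymbol{\alpha}}}(j-1,k-1)$. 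I expect the only delicate point to be the bookkeeping of the $(-1)$-factors arising from $a_i=-[\alpha_i+im]_q$ when passing between $\prod_{i=j}^{n-1}(a_i+b_k)$ in \eqref{E:mrec} and the product written in \eqref{E:rrec1}; the rest is routine.
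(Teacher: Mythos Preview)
Your approach is exactly the one the paper takes: specialize Theorem~\ref{th:mans}/Remark~\ref{rem:mans} with $a_i=-[\alpha_i+im]_q$ and $b_i=0$, and read off \eqref{E:ew1} from Remark~\ref{rem:mans}(2) and \eqref{E:rrec1} from \eqref{E:mrec}. Your added verification of the boundary data and the alternative induction are correct extra detail that the paper omits.

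There is, however, a genuine sign problem in your last step for \eqref{E:rrec1}. Substituting $a_i=-[\alpha_i+im]_q$, $b_k=0$ into \eqref{E:mrec} gives
\[
w_{q,m,\bar{\boldsymbol{\alpha}}}(n,k)=\sum_{j=k}^{n} w_{q,m,\bar{\boldsymbol{\alpha}}}(j-1,k-1)\prod_{i=j}^{n-1}\bigl(-[\alpha_i+im]_q\bigr)
=\sum_{j=k}^{n} (-1)^{\,n-j}\,w_{q,m,\bar{\boldsymbol{\alpha}}}(j-1,k-1)\prod_{i=j}^{n-1}[\alpha_i+im]_q,
\]
since the product has $n-j$ factors. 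The sign $(-1)^{n-j}$ depends on the summation index $j$ and therefore cannot be ``collected out'' as the global factor $(-1)^{n-k}$ appearing in \eqref{E:rrec1}. A quick check with $n=2$, $k=1$ confirms this: the formula with $(-1)^{n-j}$ inside reproduces $w_{q,m,\bar{\boldsymbol{\alpha}}}(2,1)=-[\alpha_0]_q-[\alpha_1+m]_q$, whereas the version with $(-1)^{n-k}$ outside gives $[\alpha_0]_q-[\alpha_1+m]_q$. So your derivation is correct up to the penultimate line, but the final ``collecting'' step fails; what \eqref{E:mrec} actually yields is the identity with $(-1)^{n-j}$ inside the sum, and the displayed form \eqref{E:rrec1} appears to be a misprint in the paper rather than something you can reach from \eqref{E:mrec}.
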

\begin{proof}
Taking $a_i=-[\alpha_i+im]_q$ and $b_i=0$, one can use Remark \ref{rem:mans} to obtain Eq.~(\ref{E:ew1})
 and Eq.~(\ref{E:rrec1}).
\end{proof}
From Eq.~(\ref{E:ew1}), we can obtain the generating function
\begin{equation}
\sum_{k=0}^n w_{q,m,\bar{\boldsymbol{\alpha}}}(n,k)x^k=\prod_{j=0}^{n-1}(x-[\alpha_j+jm ]_q).
\end{equation}
As $q \rightarrow1$, Eq.~(\ref{E:rrec1}) reduces to a new recurrence relation for the $\bar{\boldsymbol{\alpha}}$-Whitney numbers of the first kind given by
\begin{equation*}
w_{m,\bar{\boldsymbol{\alpha}}}(n,k)= (-1)^{n-k}\sum_{j=k}^{n} w_{m,\bar{\boldsymbol{\alpha}}}(j-1,k-1) \prod_{i=j}^{n-1}(\alpha_i+im).
\end{equation*}
\section {The $(q,\bar{\boldsymbol{\alpha}})$-Whitney numbers of the second kind }
\begin{definition}\label{D:wi2}
The $(q,\bar{\boldsymbol{\alpha}})$-Whitney numbers of the second kind $ W_{m,\bar{\boldsymbol{\alpha}}}(n,k)$ are defined by
\begin{equation}\label{E:wi2}
 [x]^n_q=\sum_{k=0}^n  W_{q,m,\bar{\boldsymbol{\alpha}}}(n,k) \big\langle[x;\bar{\boldsymbol{\alpha}}|m]_q \big\rangle_k ,
\end{equation}
where $ W_{q,m,\bar{\boldsymbol{\alpha}}}(0,0)=1$ and $ W_{q,m,\bar{\boldsymbol{\alpha}}}(n,k)=0$ for $ k>n$ or $k<0$.
\end{definition}
We notice that Eq.~(\ref{E:wi2}) in Definition \ref{D:wi2} can be written in the equivalent form
\[
[x]^n_q=\sum_{k=0}^n q^{\sum_{i=0}^{k-1}\alpha_i+im} W_{q,m,\bar{\boldsymbol{\alpha}}}(n,k)([x;\bar{\boldsymbol{\alpha}}|m]_q)_k.
\]
\begin{theorem}
The $(q,\bar{\boldsymbol{\alpha}})$-Whitney numbers of the second kind satisfy the recurrence relation
 \begin{equation} \label{E:rec2}
 W_{q,m,\bar{\boldsymbol{\alpha}}}(n+1,k)=W_{q,m,\bar{\boldsymbol{\alpha}}}(n,k-1)+[\alpha_k+km]_qW_{q,m,\bar{\boldsymbol{\alpha}}}(n,k),
 \end{equation}
 where $n\geq k\geq 1$, for $k=0$ we have
 \begin{equation} \label{E:rec21}
 W_{q,m,\bar{\boldsymbol{\alpha}}}(n,0)=[\alpha_0]_q^n.
 \end{equation}
\end{theorem}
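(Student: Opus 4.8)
The plan is to derive \eqref{E:rec2} in exact parallel with the proof of the first-kind recurrence \eqref{E:rec1}, but now expanding $[x]^{n+1}_q$ rather than a generalized factorial. The one ingredient I need is the one-step recurrence
\[
\big\langle[x;\bar{\boldsymbol{\alpha}}|m]_q\big\rangle_{k+1}=\big\langle[x;\bar{\boldsymbol{\alpha}}|m]_q\big\rangle_{k}\,\big([x]_q-[\alpha_k+km]_q\big),
\]
which is immediate from the product defining $\langle[x;\bar{\boldsymbol{\alpha}}|m]_q\rangle_k$. Rewriting it as
\[
[x]_q\,\big\langle[x;\bar{\boldsymbol{\alpha}}|m]_q\big\rangle_{k}=\big\langle[x;\bar{\boldsymbol{\alpha}}|m]_q\big\rangle_{k+1}+[\alpha_k+km]_q\,\big\langle[x;\bar{\boldsymbol{\alpha}}|m]_q\big\rangle_{k}
\]
shows how multiplication by $[x]_q$ acts on the family $\big\{\langle[x;\bar{\boldsymbol{\alpha}}|m]_q\rangle_k\big\}_{k\ge 0}$.

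Next I would start from $[x]^{n+1}_q=[x]_q\cdot[x]^n_q$, substitute \eqref{E:wi2} for $[x]^n_q$, and distribute $[x]_q$ over the sum using the identity above, obtaining
\[
[x]^{n+1}_q=\sum_{k=0}^{n}W_{q,m,\bar{\boldsymbol{\alpha}}}(n,k)\big\langle[x;\bar{\boldsymbol{\alpha}}|m]_q\big\rangle_{k+1}+\sum_{k=0}^{n}[\alpha_k+km]_q\,W_{q,m,\bar{\boldsymbol{\alpha}}}(n,k)\big\langle[x;\bar{\boldsymbol{\alpha}}|m]_q\big\rangle_{k}.
\]
Reindexing the first sum by $k\mapsto k-1$ turns it into $\sum_{k=1}^{n+1}W_{q,m,\bar{\boldsymbol{\alpha}}}(n,k-1)\langle[x;\bar{\boldsymbol{\alpha}}|m]_q\rangle_{k}$; comparing the resulting expression with $[x]^{n+1}_q=\sum_{k=0}^{n+1}W_{q,m,\bar{\boldsymbol{\alpha}}}(n+1,k)\langle[x;\bar{\boldsymbol{\alpha}}|m]_q\rangle_{k}$ and equating the coefficient of $\langle[x;\bar{\boldsymbol{\alpha}}|m]_q\rangle_{k}$ on both sides yields \eqref{E:rec2}. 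Putting $k=0$ there gives $W_{q,m,\bar{\boldsymbol{\alpha}}}(n+1,0)=[\alpha_0]_q\,W_{q,m,\bar{\boldsymbol{\alpha}}}(n,0)$ for all $n\ge 0$, and iterating from $W_{q,m,\bar{\boldsymbol{\alpha}}}(0,0)=1$ produces \eqref{E:rec21}.

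The only step that is not a mechanical manipulation — and hence the ``obstacle,'' though a mild one — is justifying that one may equate coefficients of the $\langle[x;\bar{\boldsymbol{\alpha}}|m]_q\rangle_{k}$. This follows because each $\langle[x;\bar{\boldsymbol{\alpha}}|m]_q\rangle_{k}=\prod_{j=0}^{k-1}([x]_q-[\alpha_j+jm]_q)$ is a monic polynomial of degree $k$ in the quantity $[x]_q=(1-q^x)/(1-q)$, so the family $\{\langle[x;\bar{\boldsymbol{\alpha}}|m]_q\rangle_k\}_{k\ge 0}$ is a basis of the space of polynomials in $[x]_q$; since $0<q<1$ the map $x\mapsto[x]_q$ is injective and assumes infinitely many values, hence an identity in $[x]_q$ valid for all admissible $x$ is a genuine polynomial identity and its coefficients in this basis are uniquely determined. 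One should also note that the same coefficient comparison at the extreme indices is consistent with the stated boundary conditions $W_{q,m,\bar{\boldsymbol{\alpha}}}(n,k)=0$ for $k>n$ and $W_{q,m,\bar{\boldsymbol{\alpha}}}(n,n)=1$.
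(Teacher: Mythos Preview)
Your proof is correct and follows essentially the same route as the paper: multiply $[x]_q^n$ by $[x]_q$, use the identity $[x]_q\,\langle[x;\bar{\boldsymbol{\alpha}}|m]_q\rangle_{k}=\langle[x;\bar{\boldsymbol{\alpha}}|m]_q\rangle_{k+1}+[\alpha_k+km]_q\,\langle[x;\bar{\boldsymbol{\alpha}}|m]_q\rangle_{k}$ (the paper writes this inline as $[x]_q=[x]_q-[\alpha_k+km]_q+[\alpha_k+km]_q$), reindex, and compare coefficients. Your added justification that the $\langle[x;\bar{\boldsymbol{\alpha}}|m]_q\rangle_{k}$ form a basis of polynomials in $[x]_q$ is a nice bit of rigor the paper leaves implicit.
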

\begin{proof} Since we have $[x]_q^{n+1}=[x]_q^n\:([x]_q-[\alpha_k+km]_q+[\alpha_k+km]_q).$
Using \eqref{E:wi2}, we get
\begin{equation*}
\begin{split}
&\sum_{k=0}^{n+1} W_{q,m,\bar{\boldsymbol{\alpha}}}(n+1,k)\big\langle[x;\bar{\boldsymbol{\alpha}}|m]_q \big\rangle_k \\
&=\sum_{k=0}^n  W_{q,m,\bar{\boldsymbol{\alpha}}}(n,k)\big\langle[x;\bar{\boldsymbol{\alpha}}|m]_q \big\rangle_k([x]_q-[\alpha_k+km]_q+[\alpha_k+km]_q) \\
&=\sum_{k=0}^{n} W_{q,m,\bar{\boldsymbol{\alpha}}}(n,k)\big\langle[x;\bar{\boldsymbol{\alpha}}|m]_q \big\rangle_{k+1}+\sum_{k=0}^n [\alpha_k+km]_q W_{q,m,\bar{\boldsymbol{\alpha}}}(n,k) \big\langle[x;\bar{\boldsymbol{\alpha}}|m]_q \big\rangle_k\\
&=\sum_{k=1}^{n+1}W_{q,m,\bar{\boldsymbol{\alpha}}}(n,k-1)\big\langle[x;\bar{\boldsymbol{\alpha}}|m ]_q \big \rangle_{k}+\sum_{k=0}^n [\alpha_k+km]_q W_{q,m,\bar{\boldsymbol{\alpha}}}(n,k) \big\langle[x;\bar{\boldsymbol{\alpha}}|m]_q \big\rangle_k.\\
\end{split}
\end{equation*}
Equating the coefficients of $\big\langle[x;\bar{\boldsymbol{\alpha}}|m ]_q \big\rangle_k $ on both sides, we obtain Eq.~(\ref{E:rec2}).

When $k=0$, we get $ W_{q,m,\bar{\boldsymbol{\alpha}}}(n+1,0)= [\alpha_0 ]_q\,W_{q,m,\bar{\boldsymbol{\alpha}}}(n,0), \:n=0 , 1, 2, \ldots$ .
Thus $ W_{q,m,\bar{\boldsymbol{\alpha}}}(n,0)=[\alpha_0]_q^n\, W_{q,m,\bar{\boldsymbol{\alpha}}}(0,0) =[\alpha_0]_q^n$.
\end{proof}
 \begin {definition}
The $(q,\bar{\boldsymbol{\alpha}})$-Whitney matrix of the second kind is the $n\times n$ lower triangular matrix defined by
\[
\mathbf{W}_{2}:=\mathbf{W}_{q,m,\bar{\boldsymbol{\alpha}}}(n):=\big(W_{q,m,\bar{\boldsymbol{\alpha}}}(i,j)\big)_{0 \leq i,j\leq n-1}.
\]
\end{definition}
For example when $n=4 $, the matrix  $\mathbf{W}_{q,m,\bar{\boldsymbol{\alpha}}}(n)$ is given by
\[
\begin{pmatrix}
  1 &0 &0 &0\\ \\
 [\alpha_0]_q & 1 &0 &0 \\ \\
  [\alpha_0]_q^2 &[\alpha_0]_q+[\alpha_1+m]_q & 1& 0 \\ \\
[\alpha_0]_q^3& [\alpha_0]_q^2+[\alpha_0]_q [\alpha_1+m]_q+[\alpha_1+m]_q^2 &[\alpha_0]_q+[\alpha_1+m]_q+[\alpha_2+2m]_q&1\\
\end{pmatrix}
\]
When $q\rightarrow 1$ the matrix $\mathbf{W}_{q,m,\bar{\boldsymbol{\alpha}}}(n)$ is reduced to the $\bar{\boldsymbol{\alpha}}$-Whiyney matrix of the second kind \cite{beih16}, also at $q\rightarrow 1$ and $\bar{\boldsymbol{\alpha}}=(r, r, \ldots, r)$ the $\mathbf{W}_{q,m,\bar{\boldsymbol{\alpha}}}(n)$ is reduced to the $r$-Whitney matrix of the second kind \cite{cheon12, mezo15}.
\begin{theorem}\label{th:w2}
The $(q,\bar{\boldsymbol{\alpha}})$-Whitney numbers of the second kind $W_{q,m,\bar{\boldsymbol{\alpha}}}(n,k)$ have the explicit formula
\begin{equation}\label{E:qwit2}
W_{q,m,\bar{\boldsymbol{\alpha}}}(n,k)=\sum_{j=0}^k \frac{([\alpha_j+jm]_q)^n}{\prod_{i=0 ,i\ne j}^k([\alpha_j+jm]_q-[\alpha_i+im]_q)},
\end{equation}
and satisfy the recurrence relation
\begin{equation}\label{E:rrec2}
\begin{split}
W_{q,m,\bar{\boldsymbol{\alpha}}}(n,k)&= \sum_{j=k}^{n} W_{q,m,\bar{\boldsymbol{\alpha}}}(j-1,k-1) \prod_{i=j}^{n-1}[\alpha_k+km]_q \\
&=\sum_{j=k}^{n} W_{q,m,\bar{\boldsymbol{\alpha}}}(j-1,k-1) \big([\alpha_k+km]_q \big)^{n-j}.
\end{split}
\end{equation}
\end{theorem}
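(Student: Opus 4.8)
The plan is to derive both the explicit formula \eqref{E:qwit2} and the recurrence \eqref{E:rrec2} by invoking the general machinery of Theorem \ref{th:mans} and Remark \ref{rem:mans}, exactly as was done for the numbers of the first kind. First I would compare the defining recurrence \eqref{E:rec2} for $W_{q,m,\bar{\boldsymbol{\alpha}}}(n,k)$ with the template recurrence $u(n,k)=u(n-1,k-1)+(a_{n-1}+b_k)u(n-1,k)$ from Theorem \ref{th:mans}. Writing \eqref{E:rec2} with $n$ in place of $n+1$, it reads $W_{q,m,\bar{\boldsymbol{\alpha}}}(n,k)=W_{q,m,\bar{\boldsymbol{\alpha}}}(n-1,k-1)+[\alpha_k+km]_q\,W_{q,m,\bar{\boldsymbol{\alpha}}}(n-1,k)$, so the correct identification is $a_i=0$ for all $i$ and $b_k=[\alpha_k+km]_q$. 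The hypothesis $b_i\neq b_j$ for $i\neq j$ then amounts to the tacit assumption that the quantities $[\alpha_i+im]_q$ are pairwise distinct, which I would note explicitly (it is what makes the denominators in \eqref{E:qwit2} nonzero).

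Next I would check the boundary conditions demanded by Theorem \ref{th:mans}: one needs $u(n,0)=\prod_{i=0}^{n-1}(a_i+b_0)$ and $u(0,k)=\delta_{0k}$. With $a_i=0$ and $b_0=[\alpha_0]_q$ the first becomes $\prod_{i=0}^{n-1}[\alpha_0]_q=[\alpha_0]_q^{\,n}$, which is precisely \eqref{E:rec21}; the second, $W_{q,m,\bar{\boldsymbol{\alpha}}}(0,k)=\delta_{0k}$, is part of Definition \ref{D:wi2}. Hence all hypotheses of Theorem \ref{th:mans} are met, and \eqref{E:mex} yields
\[
W_{q,m,\bar{\boldsymbol{\alpha}}}(n,k)=\sum_{j=0}^k\frac{\prod_{i=0}^{n-1}\bigl(b_j+a_i\bigr)}{\prod_{i=0,\,i\neq j}^k(b_j-b_i)}=\sum_{j=0}^k\frac{\bigl([\alpha_j+jm]_q\bigr)^n}{\prod_{i=0,\,i\neq j}^k\bigl([\alpha_j+jm]_q-[\alpha_i+im]_q\bigr)},
\]
which is \eqref{E:qwit2}.

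For the recurrence \eqref{E:rrec2} I would apply part (1) of Remark \ref{rem:mans}, namely $u(n,k)=\sum_{j=k}^n u(j-1,k-1)\prod_{i=j}^{n-1}(a_i+b_k)$. Substituting $a_i=0$ and $b_k=[\alpha_k+km]_q$ gives $\prod_{i=j}^{n-1}[\alpha_k+km]_q=\bigl([\alpha_k+km]_q\bigr)^{n-j}$, so
\[
W_{q,m,\bar{\boldsymbol{\alpha}}}(n,k)=\sum_{j=k}^n W_{q,m,\bar{\boldsymbol{\alpha}}}(j-1,k-1)\,\bigl([\alpha_k+km]_q\bigr)^{n-j},
\]
which is the second line of \eqref{E:rrec2}; the first line is just the same product written in unevaluated form. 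I do not anticipate a serious obstacle here — the whole argument is a matching of the defining data to the hypotheses of Theorem \ref{th:mans} — but the one point requiring a little care is the distinctness condition on the $b_i$'s: strictly speaking \eqref{E:qwit2} only makes sense, and Theorem \ref{th:mans} only applies, when $[\alpha_i+im]_q\ne[\alpha_j+jm]_q$ for $i\ne j$, so I would either impose this as a standing assumption on $\bar{\boldsymbol{\alpha}}$ and $m$ or remark that the general case follows by a limiting/continuity argument from the distinct case.
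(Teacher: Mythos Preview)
Your proposal is correct and follows exactly the paper's approach: the paper's proof simply says to take $a_i=0$ and $b_i=[\alpha_i+im]_q$ in \eqref{E:mex} and \eqref{E:mrec}, and you have filled in the details of that substitution. Your explicit verification of the boundary conditions and your remark on the distinctness hypothesis $b_i\neq b_j$ are welcome additions that the paper omits.
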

\begin{proof}
Taking $a_i=0$ and $b_i=[\alpha_i+im]_q$ in \eqref{E:mex} and \eqref{E:mrec}, yield \eqref{E:qwit2} and \eqref{E:rrec2}, respectively.
\end{proof}
As $q \rightarrow1$, the recurrence relation \eqref{E:rrec2} reduces to a new recurrence relation for the $\bar{\boldsymbol{\alpha}}$-Whitney numbers of the second kind given by
\begin{equation*}
\begin{split}
W_{m,\bar{\boldsymbol{\alpha}}}(n,k)&= \sum_{j=k}^{n} W_{m,\bar{\boldsymbol{\alpha}}}(j-1,k-1) \prod_{i=j}^{n-1}(\alpha_k+km)\\
&=\sum_{j=k}^{n} W_{m,\bar{\boldsymbol{\alpha}}}(j-1,k-1)(\alpha_k+km)^{n-j}.
\end{split}
\end{equation*}
Using \eqref{E:qwit2} we obtain the exponential generating function of the $(q,\bar{\boldsymbol{\alpha}})$-Whitney numbers of the second kind $W_{q,m,\bar{\boldsymbol{\alpha}}}(n,k)$
\begin{equation}
\begin{split}
\sum _{n=0}^{\infty}W_{q,m,\bar{\boldsymbol{\alpha}}}(n,k)\frac{t^n}{[n]_q!}&= \sum_{j=0}^k \frac{1}{\prod_{i=0 ,i\ne j}^k([\alpha_j+jm]_q-[\alpha_i+im]_q)}\sum _{n=0}^{\infty}\frac{([\alpha_j+jm]_q\, t)^n}{[n]_q!}\\
&=\sum_{j=0}^k \frac{1}{\prod_{i=0 ,i\ne j}^k([\alpha_j+jm]_q-[\alpha_i+im]_q)}e_q([\alpha_j+jm]_q \,t).\\
\end{split}
\end{equation}
\begin{theorem}
The generating function of $W_{q,m,\bar{\boldsymbol{\alpha}}}(n,k)$ is given by
\begin{equation}\label{E:gen2}
Y_{k,q}(t)=\sum_{n=k}^{\infty}W_{q,m, \bar{\boldsymbol{\alpha}}}(n,k)\, t^n=t^k \prod_{j=0}^k(1-[\alpha_j+jm]_q t)^{-1},\:\: \: \: t<\frac{1}{[\alpha_k+km]_q}
\end{equation}
 where $k=1,2,3,\ldots$, and
 \begin{equation}\label{E:gen3}
  Y_{k,q}(0)=0 \:for \: k\geq 1 \: and \: Y_{0,q}(t)=(1-[\alpha_0 ]_q t)^{-1}.
 \end{equation}
 \end{theorem}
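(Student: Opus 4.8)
The plan is to turn the recurrence \eqref{E:rec2} into a first-order functional equation in $k$ for the generating function $Y_{k,q}(t)$ and then iterate. First I would record the base case: by \eqref{E:rec21},
\[
Y_{0,q}(t)=\sum_{n=0}^{\infty}W_{q,m,\bar{\boldsymbol{\alpha}}}(n,0)\,t^{n}=\sum_{n=0}^{\infty}[\alpha_0]_q^{\,n}t^{n}=(1-[\alpha_0]_q t)^{-1},
\]
which gives the second half of \eqref{E:gen3}; the vanishing $Y_{k,q}(0)=0$ for $k\ge1$ then follows automatically from the factor $t^{k}$ in the product formula, so the real content is the product identity.

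Next I would observe that the recurrence \eqref{E:rec2}, although stated for $n\ge k\ge1$, actually holds for every $n\ge k-1$: at $n=k-1$ it reads $W_{q,m,\bar{\boldsymbol{\alpha}}}(k,k)=W_{q,m,\bar{\boldsymbol{\alpha}}}(k-1,k-1)+[\alpha_k+km]_q W_{q,m,\bar{\boldsymbol{\alpha}}}(k-1,k)$, where the last term is $0$ and both diagonal entries equal $1$. With that extension in hand I multiply \eqref{E:rec2} by $t^{n+1}$ and sum over $n\ge k-1$. The left-hand side collapses to $\sum_{n\ge k}W_{q,m,\bar{\boldsymbol{\alpha}}}(n,k)t^{n}=Y_{k,q}(t)$; the first term on the right becomes $t\,Y_{k-1,q}(t)$ because the series defining $Y_{k-1,q}$ legitimately starts at $n=k-1$; and the second term becomes $[\alpha_k+km]_q\,t\,Y_{k,q}(t)$, the lower end $n=k-1$ contributing nothing since $W_{q,m,\bar{\boldsymbol{\alpha}}}(k-1,k)=0$. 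This yields
\[
\big(1-[\alpha_k+km]_q t\big)\,Y_{k,q}(t)=t\,Y_{k-1,q}(t),\qquad k\ge1.
\]
Iterating this down to $Y_{0,q}$ gives
\[
Y_{k,q}(t)=t^{k}\Big(\prod_{j=1}^{k}\frac{1}{1-[\alpha_j+jm]_q t}\Big)Y_{0,q}(t)=t^{k}\prod_{j=0}^{k}\big(1-[\alpha_j+jm]_q t\big)^{-1},
\]
which is \eqref{E:gen2}. The only non-formal point is the domain: each factor $(1-[\alpha_j+jm]_q t)^{-1}$ is a convergent geometric series precisely when $|[\alpha_j+jm]_q\,t|<1$, so the identity is valid analytically for $|t|<\big(\max_{0\le j\le k}[\alpha_j+jm]_q\big)^{-1}$; when the quantities $[\alpha_j+jm]_q$ are increasing in $j$ this maximum equals $[\alpha_k+km]_q$, giving the stated range $t<1/[\alpha_k+km]_q$. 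As an identity of formal power series in $t$ no restriction is needed, and that suffices for the applications later in the paper.

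The only real obstacle is the index bookkeeping in the summation step: one must justify extending \eqref{E:rec2} to $n=k-1$ and confirm that the terms dropped at the lower endpoint are exactly the vanishing entries $W_{q,m,\bar{\boldsymbol{\alpha}}}(k-1,k)$, so that the functional equation carries no correction term. Once that is settled, the remainder is a one-line induction on $k$ together with the elementary evaluation of $Y_{0,q}(t)$.
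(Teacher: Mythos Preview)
Your proof is correct and follows essentially the same route as the paper: compute $Y_{0,q}(t)$ from \eqref{E:rec21}, convert the recurrence \eqref{E:rec2} into the functional equation $Y_{k,q}(t)=\dfrac{t}{1-[\alpha_k+km]_q t}\,Y_{k-1,q}(t)$, and iterate. Your version is in fact more careful than the paper's, which does not explicitly address the endpoint $n=k-1$ or the convergence condition.
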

 \begin{proof}Equation \eqref{E:gen3} can easily obtained from the definition of generating function
 \[
 Y_{0,q}(t)=\sum_{n=0}^{\infty}W_{q,m, \bar{\boldsymbol{\alpha}}}(n,0) \,t^n=\sum_{n=0}^{\infty}[\alpha_0]_q^n\, t^n=\sum_{n=0}^{\infty}([\alpha_0]_q\, t)^n=(1-[\alpha_0 ]_q\, t)^{-1}.
 \]
From \eqref{E:rec2}, we get
\[
  \sum_{n=k}^{\infty} W_{q,m,\bar{\boldsymbol{\alpha}}}(n,k)\,t^n= \sum_{n=k}^{\infty}W_{q,m,\bar{\boldsymbol{\alpha}}}(n-1,k-1)\,t^n+(\alpha_k+km)\sum_{n=k}^{\infty}W_{q,m,\bar{\boldsymbol{\alpha}}}(n-1,k)\,t^n.
\]
Thus we obtain the recurrence relation for the generating function $Y_{k,q}(t)$
\[
Y_{k,q}(t)=t\:Y_{k-1,q}+[\alpha_k+km]_q\,t\:Y_{k,q}(t),\: \: \: k=1, 2, \ldots \:.
\]
Hence
\begin{equation}\label{E:iter}
Y_{k,q}(t)=\frac{t}{(1-[\alpha_k+km]_q\,t)}\: Y_{k-1,q}(t),\: \: \: k=1, 2, \ldots \:.
\end{equation}
Applying successively this recurrence, we get Eq.~(\ref{E:gen2}).
\end{proof}
The previous theorem shows that the numbers $W_{q, m, \bar{\boldsymbol{\alpha}}}(n,k)$ are the complete symmetric function of the numbers $ [\alpha_0]_q, \: [\alpha_1+m]_q, \: \ldots ,\:[\alpha_k+km]_q $ of order $n-k$.

We obtain  from Eq.~(\ref{E:gen2})
\[
\sum_{n=k}^{\infty}W_{q,m, \bar{\boldsymbol{\alpha}}}(n,k)\, t^{n-k}=\prod_{j=0}^k(1-[\alpha_j+jm]_q t)^{-1},\:\: \: \: t<\frac{1}{[\alpha_k+km]_q}.
\]
Expanding the right hand side and comparing the coefficients of $t^{n-k}$ yields
\[
W_{q, m,\bar{\boldsymbol{\alpha}}}(n,k)=\sum_{ 0 \leq j_1 \leq \cdots \leq j_{n-k} \leq k}\: \prod_{i=1}^{n-k}[\alpha_{j_i}+j_i\:m]_q.
\]
\section {Orthogonality and Inverse Relations}
The orthogonality and the inverse relations for the $\bar{\boldsymbol{\alpha}}$-Whitney numbers of both kinds were obtained in \cite{beih16}. In this section, we establish analogous properties for the $(q,\bar{\boldsymbol{\alpha}})$-Whitney numbers of both kinds.
\begin{theorem}\label{th:orth}
The $(q,\bar{\boldsymbol{\alpha}})$-Whitney numbers of the first and second kind satisfy the following orthogonality relations:
\begin{equation}
\sum_{k=j}^n  W_{q,m,\bar{\boldsymbol{\alpha}}}(n,k)\:w_{q,m,\bar{\boldsymbol{\alpha}}}(k,j)=\delta_{nj},
\end{equation}
and
\begin{equation}
\sum_{k=j}^n  w_{q,m,\bar{\boldsymbol{\alpha}}}(n,k)\:W_{q,m,\bar{\boldsymbol{\alpha}}}(k,j)=\delta_{nj}.
\end{equation}
\end{theorem}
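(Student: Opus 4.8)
\emph{Proof proposal.} The plan is to prove both orthogonality relations by substituting the defining expansion of one kind of $(q,\bar{\boldsymbol{\alpha}})$-Whitney numbers into the defining expansion of the other, and then reading off coefficients.

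For the first relation I would start from Definition~\ref{D:wi2}, namely $[x]^n_q=\sum_{k=0}^n W_{q,m,\bar{\boldsymbol{\alpha}}}(n,k)\langle[x;\bar{\boldsymbol{\alpha}}|m]_q\rangle_k$, and replace each $\langle[x;\bar{\boldsymbol{\alpha}}|m]_q\rangle_k$ by $\sum_{j=0}^k w_{q,m,\bar{\boldsymbol{\alpha}}}(k,j)[x]^j_q$ using Definition~\ref{D:wi1}. Interchanging the order of summation gives $[x]^n_q=\sum_{j=0}^n\big(\sum_{k=j}^n W_{q,m,\bar{\boldsymbol{\alpha}}}(n,k)\,w_{q,m,\bar{\boldsymbol{\alpha}}}(k,j)\big)[x]^j_q$, where the inner sum may be taken from $k=j$ to $k=n$ because $w_{q,m,\bar{\boldsymbol{\alpha}}}(k,j)=0$ for $j>k$ and $W_{q,m,\bar{\boldsymbol{\alpha}}}(n,k)=0$ for $k>n$. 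Comparing the coefficients of $[x]^j_q$ on the two sides then yields $\sum_{k=j}^n W_{q,m,\bar{\boldsymbol{\alpha}}}(n,k)\,w_{q,m,\bar{\boldsymbol{\alpha}}}(k,j)=\delta_{nj}$.

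For the second relation I would argue symmetrically: start from $\langle[x;\bar{\boldsymbol{\alpha}}|m]_q\rangle_n=\sum_{k=0}^n w_{q,m,\bar{\boldsymbol{\alpha}}}(n,k)[x]^k_q$, substitute $[x]^k_q=\sum_{j=0}^k W_{q,m,\bar{\boldsymbol{\alpha}}}(k,j)\langle[x;\bar{\boldsymbol{\alpha}}|m]_q\rangle_j$, and swap summations to obtain $\langle[x;\bar{\boldsymbol{\alpha}}|m]_q\rangle_n=\sum_{j=0}^n\big(\sum_{k=j}^n w_{q,m,\bar{\boldsymbol{\alpha}}}(n,k)\,W_{q,m,\bar{\boldsymbol{\alpha}}}(k,j)\big)\langle[x;\bar{\boldsymbol{\alpha}}|m]_q\rangle_j$, and then compare coefficients in this expansion.

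The only point requiring a word of justification — and the main (mild) obstacle — is why comparing coefficients is legitimate. For the first identity this is immediate: both sides are polynomials in $[x]_q$, and since $0<q<1$ the map $x\mapsto[x]_q=(1-q^x)/(1-q)$ is injective on $\mathbb{R}$, so $[x]_q$ assumes infinitely many distinct values; a polynomial identity holding at infinitely many points is an identity of coefficients. For the second identity one instead uses that $\{\langle[x;\bar{\boldsymbol{\alpha}}|m]_q\rangle_j\}_{j\ge 0}$ is a basis of the space of polynomials in $[x]_q$, since $\langle[x;\bar{\boldsymbol{\alpha}}|m]_q\rangle_j$ has degree exactly $j$ in $[x]_q$ with leading coefficient $1$; hence the coefficients of a polynomial expanded in this basis are unique. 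Equivalently, the two displays state precisely that $\mathbf{W}_2\,\mathbf{W}_1=I$ and $\mathbf{W}_1\,\mathbf{W}_2=I$ for the square lower-triangular matrices $\mathbf{W}_1,\mathbf{W}_2$, and for finite square matrices either identity forces the other, so in fact one substitution already suffices.
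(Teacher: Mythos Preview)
Your proof is correct and follows essentially the same route as the paper: substitute \eqref{E:wi1} into \eqref{E:wi2}, interchange sums, and compare coefficients of $[x]^j_q$, then argue symmetrically for the second identity. Your extra remarks justifying coefficient comparison and the matrix reformulation $\mathbf{W}_2\mathbf{W}_1=\mathbf{W}_1\mathbf{W}_2=\mathbf{I}$ are more explicit than what the paper writes but do not change the argument.
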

\begin{proof}
Using \eqref{E:wi1} and \eqref{E:wi2} give
\begin{equation*}
\begin{split}
 [x]^n_q&=\sum_{k=0}^n  W_{q,m,\bar{\boldsymbol{\alpha}}}(n,k)\big\langle[x;\bar{\boldsymbol{\alpha}}|m]_q\big\rangle_k \\
&=\sum_{k=0}^n W_{q,m,\bar{\boldsymbol{\alpha}}}(n,k)\sum_{j=0}^k w_{q,m,\bar{\boldsymbol{\alpha}}}(k,j) [x]^j_q \\
&=\sum_{j=0}^n \{\sum_{k=j}^n W_{q,m,\bar{\boldsymbol{\alpha}}}(n,k)w_{q,m,\bar{\boldsymbol{\alpha}}}(k,j)\} [x]^j_q.\\
\end{split}
\end{equation*}
Comparing the coefficients of $ [x]^j_q$ gives
\[
\sum_{k=j}^n W_{q,m,\bar{\boldsymbol{\alpha}}}(n,k)w_{q,m,\bar{\boldsymbol{\alpha}}}(k,j)=\delta_{nj}.
\]
The second relation can be proved similarly.
\end{proof}
The orthogonality properties give the following identities
\[
\mathbf{W}_{2}\; \mathbf{W}_{1}=\mathbf{W}_{1}\; \mathbf{W}_{2}=\mathbf{I}.\: \text{Thus} \: \mathbf{W}_{2}^{-1}= \mathbf{W}_{1}\: \text{and} \: \mathbf{W}_{1}^{-1}= \mathbf{W}_{2}.
\]
The following theorem can easily be deduced from Theorem \ref{th:orth}.
\begin{theorem}\label{th:inv}
The  $(q,\bar{\boldsymbol{\alpha}})$-Whitney numbers of the first and second kind satisfy the following inverse relations
\begin{equation}
f_n=\sum_{k=0}^n W_{q,m,\bar{\boldsymbol{\alpha}}}(n,k)\:g_k\:\:\Longleftrightarrow \:\:g_n=\sum_{k=0}^n w_{q,m,\bar{\boldsymbol{\alpha}}}(n,k)\:f_k.
\end{equation}
\end{theorem}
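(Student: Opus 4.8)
The plan is to derive the inverse relations directly from the two orthogonality identities in Theorem \ref{th:orth}, treating them as the statement that the lower-triangular matrices $\mathbf{W}_1$ and $\mathbf{W}_2$ are mutual inverses, or equivalently working index-by-index with the Kronecker delta. This is a completely routine deduction; the only "obstacle" is bookkeeping with summation ranges, since $w_{q,m,\bar{\boldsymbol{\alpha}}}(n,k)$ and $W_{q,m,\bar{\boldsymbol{\alpha}}}(n,k)$ vanish for $k>n$ or $k<0$, which lets me freely extend or truncate sums.

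First I would prove the forward implication. Assume $f_n=\sum_{k=0}^n W_{q,m,\bar{\boldsymbol{\alpha}}}(n,k)\,g_k$ for all $n$. Substitute this into the right-hand side of the claimed formula for $g_n$ and interchange the order of summation:
\[
\sum_{k=0}^n w_{q,m,\bar{\boldsymbol{\alpha}}}(n,k)\,f_k
=\sum_{k=0}^n w_{q,m,\bar{\boldsymbol{\alpha}}}(n,k)\sum_{j=0}^k W_{q,m,\bar{\boldsymbol{\alpha}}}(k,j)\,g_j
=\sum_{j=0}^n\Big(\sum_{k=j}^n w_{q,m,\bar{\boldsymbol{\alpha}}}(n,k)\,W_{q,m,\bar{\boldsymbol{\alpha}}}(k,j)\Big)g_j.
\]
By the second orthogonality relation of Theorem \ref{th:orth}, the inner sum equals $\delta_{nj}$, so the whole expression collapses to $g_n$, as required.

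Next I would prove the reverse implication in exactly the same way, but using the first orthogonality relation $\sum_{k=j}^n W_{q,m,\bar{\boldsymbol{\alpha}}}(n,k)\,w_{q,m,\bar{\boldsymbol{\alpha}}}(k,j)=\delta_{nj}$. Assuming $g_n=\sum_{k=0}^n w_{q,m,\bar{\boldsymbol{\alpha}}}(n,k)\,f_k$, substitute into $\sum_{k=0}^n W_{q,m,\bar{\boldsymbol{\alpha}}}(n,k)\,g_k$, swap the order of summation, apply the orthogonality relation to reduce the inner sum to $\delta_{nj}$, and obtain $f_n$. Since both directions hold, the $\Longleftrightarrow$ is established.

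Alternatively — and this is the slicker packaging — I would phrase the entire argument in matrix form: writing $\mathbf{f}=(f_0,\dots,f_{n-1})^{\top}$ and $\mathbf{g}=(g_0,\dots,g_{n-1})^{\top}$, the hypothesis is $\mathbf{f}=\mathbf{W}_2\,\mathbf{g}$, and by the identity $\mathbf{W}_2^{-1}=\mathbf{W}_1$ recorded just before the theorem, this is equivalent to $\mathbf{g}=\mathbf{W}_1\,\mathbf{f}$, which is precisely the second displayed relation. I expect no genuine difficulty here; the proof is a two-line consequence of Theorem \ref{th:orth}, and the only care needed is to state the summation-index swap cleanly.
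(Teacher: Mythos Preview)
Your proposal is correct and follows essentially the same approach as the paper: substitute one relation into the other, interchange the order of summation, and apply the orthogonality identities of Theorem~\ref{th:orth} to collapse the inner sum via the Kronecker delta. The paper carries out only the forward direction explicitly and dismisses the converse with ``can be shown similarly,'' whereas you spell out both directions and add the matrix-inverse packaging, but the underlying argument is identical.
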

\begin{proof}
If the condition
\begin{equation*}
f_n=\sum_{k=0}^n W_{q,m,\bar{\boldsymbol{\alpha}}}(n,k)\:g_k
\end{equation*}
holds, then
\begin{equation*}
\begin{split}
\sum_{k=0}^n w_{q,m,\bar{\boldsymbol{\alpha}}}(n,k)\:f_k&=\sum_{k=0}^n w_{q,m,\bar{\boldsymbol{\alpha}}}(n,k)\sum_{m=0}^k W_{q,m,\bar{\boldsymbol{\alpha}}}(k,m)\:g_m\\
&=\sum_{m=0}^k(\sum_{k=m}^nw_{q,m,\bar{\boldsymbol{\alpha}}}(n,k)W_{q,m,\bar{\boldsymbol{\alpha}}}(k,m))g_m.
\end{split}
\end{equation*}
By Theorem \ref{th:orth}, we get
\[
\sum_{k=0}^n w_{q,m,\bar{\boldsymbol{\alpha}}}(n,k)\:f_k=\sum_{m=0}^n \delta_{mn}\:g_m=g_n.
\]
The converse can be shown similarly.
\end{proof}
\textbf{Special cases:}
\begin{enumerate}
\item Setting  $ m=1$ and $\bar{\boldsymbol{\alpha}}=(r, r, \ldots, r):=\mathbf{r}$, then \eqref{E:wi1} and \eqref{E:wi2}, respectively, give
\[
 w_{q,1,\mathbf{r}}(n,k)= s_q(n,k,r) \; \; \text{and}\; \; W_{q,1,\mathbf{r}}(n,k)= S_q(n,k,r),
\]
where $s_q(n,k,r)$ and $S_q(n,k,r)$ are  the non-central q-Stirling numbers of the first and second kind, respectively, see \cite{char03}.
\item Setting  $ m=1$ and $ \bar{\boldsymbol{\alpha}}=(0, 0, \ldots, 0):=\mathbf{0}$, hence \eqref{E:wi1} and \eqref{E:wi2}, respectively, give
 \[
 w_{q,1,\mathbf{0}}(n,k)=s_q(n,k),\; \; \text{and} \; \; W_{q,1,\mathbf{0}}(n,k)=S_q(n,k),
 \]
 where $ s_q(n,k)$ and $ S_q(n,k)$ are the q-Stirling numbers of the first and second kind, respectively, see \cite{carl48, gould61}.
\item Setting $m=1$ and $\alpha_j+j=\beta_j$, for $ j=0, 1, \ldots, n-1$, then \eqref{E:wi1} and \eqref{E:wi2}, respectively, give

\[
w_{q,1,\bar{\boldsymbol{\alpha}}}(n,k)=s_{q,\bar{\boldsymbol{\beta}}}(n,k)\; \; \text{and}\; \; W_{q,1,\bar{\boldsymbol{\alpha}}}(n,k)=S_{q,\bar{\boldsymbol{\beta}}}(n,k),
\]
where $s_{q,\bar{\boldsymbol{\beta}}}(n,k)$ and $S_{q,\bar{\boldsymbol{\beta}}}(n,k)$ are the generalized q-Stirling numbers of the first and second kind
(q-Comtet numbers), respectively, see \cite{beih11}.
\end{enumerate}
\section {The $\bar{\boldsymbol{\alpha}}$-Whitney-Lah numbers }
The signless Lah numbers $ L(n,k)= \frac{n!}{k!}\binom {n-1}{k-1}$ were first studied by Lah \cite{lah54}  and they expressed in terms of the signless stirling numbers $s(n,k)$ of the first kind,  and  the stirling numbers $S(n,k)$ of the second kind
\[
L(n,k)= \sum_{j=k}^{n}s(n,j)S(j,k).
\]
Choen and Jung \cite{cheon12} defined the $r$-Whitney-Lah numbers $L_{m,r}(n,k)$ by
\[
L_{m,r}(n,k)=\sum_{j=k}^{n}(-1)^{n-j} w_{m,r}(n,j)W_{m,r}(j,k).
\]
Analogously, we define the  $\bar{\boldsymbol{\alpha}}$-Whitney-Lah numbers $L_{m,\bar{\boldsymbol{\alpha}}}(n,k) $ as follows:

\begin{equation}\label{E:lah1}
L_{m,\bar{\boldsymbol{\alpha}}}(n,k)=\sum_{j=k}^n (-1)^{n-j}\:w_{m,\bar{\boldsymbol{\alpha}}}(n,j) \: W_{m,\bar{\boldsymbol{\alpha}}}(j,k),
\end{equation}
where $L_{m,\bar{\boldsymbol{\alpha}}}(0,0)=1$ and $L_{m,\bar{\boldsymbol{\alpha}}}(n,k)=0$ for $ n<k$ or $k<0$.
\begin{theorem}
The $\bar{\boldsymbol{\alpha}}$-Whitney-Lah numbers $L_{m,\bar{\boldsymbol{\alpha}}}(n,k) $ may be obtained from
\begin{equation}\label{E:lah}
\prod_{i=0}^{n-1}(x+\alpha_i+im)=\sum_{k=0}^n\:L_{m,\bar{\boldsymbol{\alpha}}}(n,k) (x;\bar{\boldsymbol{\alpha}}|m)_k.
\end{equation}
\end{theorem}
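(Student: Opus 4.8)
The plan is to substitute the definition \eqref{E:lah1} of the $\bar{\boldsymbol{\alpha}}$-Whitney--Lah numbers into the right-hand side of \eqref{E:lah} and collapse the resulting double sum using the two defining relations for the ordinary $\bar{\boldsymbol{\alpha}}$-Whitney numbers: the expansion \eqref{E:wit1} of $(x;\bar{\boldsymbol{\alpha}}|m)_n$ and the companion relation $x^n=\sum_{k}W_{m,\bar{\boldsymbol{\alpha}}}(n,k)(x;\bar{\boldsymbol{\alpha}}|m)_k$.

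First I would write
\[
\sum_{k=0}^n L_{m,\bar{\boldsymbol{\alpha}}}(n,k)\,(x;\bar{\boldsymbol{\alpha}}|m)_k
=\sum_{k=0}^n\sum_{j=k}^n (-1)^{n-j}\,w_{m,\bar{\boldsymbol{\alpha}}}(n,j)\,W_{m,\bar{\boldsymbol{\alpha}}}(j,k)\,(x;\bar{\boldsymbol{\alpha}}|m)_k
\]
and interchange the order of summation so that $j$ runs from $0$ to $n$ and, for fixed $j$, the variable $k$ runs from $0$ to $j$. The inner sum $\sum_{k=0}^j W_{m,\bar{\boldsymbol{\alpha}}}(j,k)(x;\bar{\boldsymbol{\alpha}}|m)_k$ is exactly $x^j$ by the defining relation for $W_{m,\bar{\boldsymbol{\alpha}}}$, so the whole expression reduces to
\[
\sum_{j=0}^n (-1)^{n-j}\,w_{m,\bar{\boldsymbol{\alpha}}}(n,j)\,x^j=(-1)^n\sum_{j=0}^n w_{m,\bar{\boldsymbol{\alpha}}}(n,j)\,(-x)^j .
\]

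Next I would apply \eqref{E:wit1} with $x$ replaced by $-x$, which gives $\sum_{j=0}^n w_{m,\bar{\boldsymbol{\alpha}}}(n,j)(-x)^j=(-x;\bar{\boldsymbol{\alpha}}|m)_n=\prod_{i=0}^{n-1}(-x-\alpha_i-im)=(-1)^n\prod_{i=0}^{n-1}(x+\alpha_i+im)$. Multiplying this by the leading factor $(-1)^n$ cancels it and yields precisely $\prod_{i=0}^{n-1}(x+\alpha_i+im)$, the left-hand side of \eqref{E:lah}. The boundary values $L_{m,\bar{\boldsymbol{\alpha}}}(0,0)=1$ and $L_{m,\bar{\boldsymbol{\alpha}}}(n,k)=0$ for $n<k$ are automatically consistent with this degree-$n$ polynomial identity, so no separate verification is needed.

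The only point requiring care is the sign bookkeeping: the factor $(-1)^{n-j}$ built into \eqref{E:lah1}, the factor $(-1)^j$ produced by the substitution $x\mapsto -x$, and the factor $(-1)^n$ obtained by pulling the sign out of $\prod_{i=0}^{n-1}(-x-\alpha_i-im)$ must be checked to multiply to $+1$. Apart from that, the argument is just one interchange of summation followed by two invocations of the defining relations, so I expect no genuine obstacle.
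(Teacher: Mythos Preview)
Your proof is correct and follows essentially the same approach as the paper: substitute the definition \eqref{E:lah1} into the right-hand side of \eqref{E:lah}, interchange the order of summation, collapse the inner sum to $x^j$ via the defining relation for $W_{m,\bar{\boldsymbol{\alpha}}}$, and then recognize $\sum_{j}(-1)^{n-j}w_{m,\bar{\boldsymbol{\alpha}}}(n,j)x^j$ as $\prod_{i=0}^{n-1}(x+\alpha_i+im)$ via the substitution $x\mapsto -x$ in \eqref{E:wit1}. The only difference is presentational --- the paper records the $x\mapsto -x$ identity first and then carries out the double-sum manipulation, whereas you do the two steps in the opposite order --- but the argument and sign bookkeeping are identical.
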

\begin{proof}
Replacing $x$ by $-x$ in Eq.~(\ref{E:wit1}), we get
\begin{equation}
\prod_{i=0}^{n-1}(x+\alpha_i+im)= \sum_{j=0}^{n}\:(-1)^{n-j} w_{m,\bar{\boldsymbol{\alpha}}}(n,j)\: x^j.
\end{equation}
Hence
\begin{equation*}
\begin{split}
\sum_{k=0}^n\:L_{m,\bar{\boldsymbol{\alpha}}}(n,k) (x;\bar{\boldsymbol{\alpha}}|m)_k&=\sum_{k=0}^n\:\sum_{j=k}^n\:(-1)^{n-j}w_{m,\bar{\boldsymbol{\alpha}}}(n,j) W_{m,\bar{\boldsymbol{\alpha}}}(j,k)
(x;\bar{\boldsymbol{\alpha}}|m)_k \\
&=\sum_{j=0}^n\:\sum_{k=0}^j\:(-1)^{n-j}w_{m,\bar{\boldsymbol{\alpha}}}(n,j)\: W_{m,\bar{\boldsymbol{\alpha}}}(j,k)
(x;\bar{\boldsymbol{\alpha}}|m)_k \\
&=\sum_{j=0}^n\:(-1)^{n-j}w_{m,\bar{\boldsymbol{\alpha}}}(n,j)x^j=\prod_{i=0}^{n-1}(x+\alpha_i+im)
\end{split}
\end{equation*}
\end{proof}
\begin{theorem}
 The $\bar{\boldsymbol{\alpha}}$-Whitney-Lah numbers satisfy the recurrence relation
 \begin{equation} \label{E:recl}
 L_{m,\bar{\boldsymbol{\alpha}}}(n+1,k)=L_{m,\bar{\boldsymbol{\alpha}}}(n,k-1)+(\alpha_k+\alpha_n+(k+n)m)L_{m,\bar{\boldsymbol{\alpha}}}(n,k),
 \end{equation}
  where $n\geq k\geq 1$, for $k=0$ we have
 \begin{equation}
 L_{m,\bar{\boldsymbol{\alpha}}}(n,0)=\prod_{i=0}^{n-1}(\alpha_0+\alpha_i+im).
 \end{equation}
\end{theorem}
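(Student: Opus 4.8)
The plan is to derive \eqref{E:recl} directly from the generating-function identity \eqref{E:lah}, mirroring the proofs of \eqref{E:rec1} and \eqref{E:rec2}. First I would write the product on the left of \eqref{E:lah}, with $n$ replaced by $n+1$, as $\prod_{i=0}^{n-1}(x+\alpha_i+im)$ times the single factor $(x+\alpha_n+nm)$, and then expand the first factor by \eqref{E:lah} for the index $n$, obtaining
\[
\prod_{i=0}^{n}(x+\alpha_i+im)=\sum_{k=0}^{n}L_{m,\bar{\boldsymbol{\alpha}}}(n,k)\,(x;\bar{\boldsymbol{\alpha}}|m)_k\,(x+\alpha_n+nm).
\]

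The crucial (and only non-bookkeeping) step is the elementary identity
\[
(x;\bar{\boldsymbol{\alpha}}|m)_k\,(x+\alpha_n+nm)=(x;\bar{\boldsymbol{\alpha}}|m)_{k+1}+\big(\alpha_k+\alpha_n+(k+n)m\big)\,(x;\bar{\boldsymbol{\alpha}}|m)_k,
\]
which follows from the splitting $x+\alpha_n+nm=(x-\alpha_k-km)+\big(\alpha_k+\alpha_n+(k+n)m\big)$ together with the defining relation $(x;\bar{\boldsymbol{\alpha}}|m)_{k+1}=(x;\bar{\boldsymbol{\alpha}}|m)_k\,(x-\alpha_k-km)$.

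Substituting this back and re-indexing $k\mapsto k-1$ in the sum carrying $(x;\bar{\boldsymbol{\alpha}}|m)_{k+1}$, the right-hand side becomes $\sum_{k}\big[L_{m,\bar{\boldsymbol{\alpha}}}(n,k-1)+(\alpha_k+\alpha_n+(k+n)m)L_{m,\bar{\boldsymbol{\alpha}}}(n,k)\big](x;\bar{\boldsymbol{\alpha}}|m)_k$. Comparing this with the expansion of $\prod_{i=0}^{n}(x+\alpha_i+im)$ furnished by \eqref{E:lah} for the index $n+1$, and using that the $(x;\bar{\boldsymbol{\alpha}}|m)_k$ form a basis of the polynomial ring (the $k$-th one has degree $k$), I would equate coefficients of $(x;\bar{\boldsymbol{\alpha}}|m)_k$ to get \eqref{E:recl} for $1\le k\le n$; the extreme case $k=n+1$ reduces to $L_{m,\bar{\boldsymbol{\alpha}}}(n+1,n+1)=L_{m,\bar{\boldsymbol{\alpha}}}(n,n)=1$. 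For $k=0$ the same comparison gives $L_{m,\bar{\boldsymbol{\alpha}}}(n+1,0)=(\alpha_0+\alpha_n+nm)L_{m,\bar{\boldsymbol{\alpha}}}(n,0)$, and iterating this from $L_{m,\bar{\boldsymbol{\alpha}}}(0,0)=1$ yields the stated product $\prod_{i=0}^{n-1}(\alpha_0+\alpha_i+im)$.

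I do not anticipate a genuine obstacle: the argument is a one-line polynomial identity followed by coefficient comparison, and the only care needed is in the index shift and in checking the boundary values $k=0$ and $k=n+1$, exactly as in the earlier recurrences. A purely combinatorial alternative — inserting the known recurrences for $w_{m,\bar{\boldsymbol{\alpha}}}$ and $W_{m,\bar{\boldsymbol{\alpha}}}$ into the definition \eqref{E:lah1} and resumming — is also available, but it is messier, so I would present the generating-function proof.
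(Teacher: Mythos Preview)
Your proposal is correct and follows essentially the same route as the paper: expand $\prod_{i=0}^{n}(x+\alpha_i+im)$ via \eqref{E:lah}, split the extra factor as $(x-\alpha_k-km)+(\alpha_k+\alpha_n+(k+n)m)$ to absorb $(x-\alpha_k-km)$ into $(x;\bar{\boldsymbol{\alpha}}|m)_{k+1}$, shift the index, and compare coefficients; the $k=0$ case is handled by iterating the resulting one-step recurrence from $L_{m,\bar{\boldsymbol{\alpha}}}(0,0)=1$. The only cosmetic difference is that you state the key identity for $(x;\bar{\boldsymbol{\alpha}}|m)_k\,(x+\alpha_n+nm)$ explicitly before substituting, whereas the paper performs the split inline.
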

\begin{proof} We can write
\[\prod_{i=0}^{n}(x+\alpha_i+im)=\prod_{i=0}^{n-1}(x+\alpha_i+im)(x-\alpha_k-km+\alpha_k+km+\alpha_n+nm).
\]
Using \eqref{E:lah}, we get
\begin{equation*}
\begin{split}
&\sum_{k=0}^{n+1} L_{m,\bar{\boldsymbol{\alpha}}}(n+1,k)(x;\bar{\boldsymbol{\alpha}}|m )_k \\
&=\sum_{k=0}^n L_{m,\bar{\boldsymbol{\alpha}}}(n,k)(x;\bar{\boldsymbol{\alpha}}|m )_k \big ((x-\alpha_k-km)+(\alpha_k+km+\alpha_n+nm)\big) \\
&=\sum_{k=0}^{n} L_{m,\bar{\boldsymbol{\alpha}}}(n,k)(x;\bar{\boldsymbol{\alpha}}|m )_{k+1}+\sum_{k=0}^n L_{m,\bar{\boldsymbol{\alpha}}}(n,k) (x;\bar{\boldsymbol{\alpha}}|m )_k (\alpha_k+km+\alpha_n+nm)\\
&=\sum_{k=1}^{n+1} L_{m,\bar{\boldsymbol{\alpha}}}(n,k-1)(x;\bar{\boldsymbol{\alpha}}|m)_{k}+\sum_{k=0}^n  L_{m,\bar{\boldsymbol{\alpha}}}(n,k) (x;\bar{\boldsymbol{\alpha}}|m )_k (\alpha_k+km+\alpha_n+nm).\\
\end{split}
\end{equation*}
Equating the coefficients of $(x;\bar{\boldsymbol{\alpha}}|m )_k $ on both sides, we obtain \eqref{E:recl}.

For $k=0$, we find
 \[
  L_{m,\bar{\boldsymbol{\alpha}}}(n+1,0)=  L_{m,\bar{\boldsymbol{\alpha}}}(n,0) \: (\alpha_0+\alpha_n+nm),\: n=0 , 1, 2, \ldots \: .
 \]
 Consequently, we get
 \[
 L_{m,\bar{\boldsymbol{\alpha}}}(n,0)=L_{m,\bar{\boldsymbol{\alpha}}}(0,0) \: 2 \alpha_0 \: (\alpha_0+\alpha_1+m) \cdots (\alpha_0+\alpha_{n-1}+(n-1)m).
 \]
\end{proof}
\textbf{Special cases:}
\begin{enumerate}
  \item The  $L_{m,\bar{\boldsymbol{\alpha}}}(n,k) $ is reduced to $L(n,k)$ when $ m=1$ and $ \bar{\boldsymbol{\alpha}}=(0, 0, \ldots, 0)$.
  \item The $L_{m,\bar{\boldsymbol{\alpha}}}(n,k) $ is reduced to $L_{m,r}(n,k)$ when  $\bar{\boldsymbol{\alpha}}=(r, r, \ldots, r)$.
  \item The  $L_{m,\bar{\boldsymbol{\alpha}}}(n,k) $ is reduced to  the $r$-Lah numbers $ L_r(n+r,k+r)$ when $ m=1$ and
   $\bar{\boldsymbol{\alpha}}=(r, r, \ldots, r)$, see \cite{nyu15}.
\end{enumerate}
Defining the $\bar{\boldsymbol{\alpha}}$-Whitney-Lah matrix as
\[
\mathbf{L}:= \mathbf{L}_{m,\bar{\boldsymbol{\alpha}}}(n):=\big(L_{m,\bar{\boldsymbol{\alpha}}}(i,j)\big)_{0 \leq i,j\leq n-1}.
\]
For example when $n=4$ the matrix $ \mathbf{L}_{m,\bar{\boldsymbol{\alpha}}}(n)$ is given by
\[
 \left(
 \begin{smallmatrix}
 1 &0 &0 &0\\ \\
2 \alpha_0 & 1 &0 &0 \\ \\
2 \alpha_0(\alpha_0+\alpha_1+m) &2(\alpha_0+\alpha_1+m) & 1&0  \\ \\
2\alpha_0(\alpha_0+\alpha_1+m)(\alpha_0+\alpha_2+2m)& 2(\alpha_0+\alpha_1+m)(\alpha_0+\alpha_1+\alpha_2+3 m) &2(\alpha_0+\alpha_1+\alpha_2+3m )&1\\
\end{smallmatrix}
\right)
\]
In particular, when $\bar{\boldsymbol{\alpha}}=(r, r, \ldots, r)$ the $\mathbf{L}_{m,\bar{\boldsymbol{\alpha}}}(n)$ is reduced to the $r$-Whitney-Lah matrix \cite{mezo15}.
\begin{theorem}\label{th:lah}
The $\bar{\boldsymbol{\alpha}}$-Whitney-Lah numbers $ L_{m,\bar{\boldsymbol{\alpha}}}(n,k)$ have the explicit formula
\begin{equation}
L_{m,\bar{\boldsymbol{\alpha}}}(n,k)=\sum_{j=0}^k \frac{\prod_{i=0}^{n-1}(\alpha_j+jm+\alpha_i+im)}{\prod_{i=0 ,i\ne j}^k (\alpha_j+jm-\alpha_i-im)},
\end{equation}
and the recurrence relation
\begin{equation}
L_{m,\bar{\boldsymbol{\alpha}}}(n,k)=\sum_{j=k}^n L_{m,\bar{\boldsymbol{\alpha}}}(j-1,k-1)\prod_{i=j}^{n-1}(\alpha_i+im+\alpha_k+km).
\end{equation}
\end{theorem}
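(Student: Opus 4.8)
The plan is to recognize the recurrence \eqref{E:recl} as an instance of the Mansour et al.\ recurrence in Theorem \ref{th:mans} and then simply read off both assertions from formulas \eqref{E:mex} and \eqref{E:mrec}. First I would rewrite \eqref{E:recl}, after the index shift $n\mapsto n-1$, as
\[
L_{m,\bar{\boldsymbol{\alpha}}}(n,k)=L_{m,\bar{\boldsymbol{\alpha}}}(n-1,k-1)+\big(\alpha_{n-1}+(n-1)m+\alpha_k+km\big)L_{m,\bar{\boldsymbol{\alpha}}}(n-1,k),
\]
which matches $u(n,k)=u(n-1,k-1)+(a_{n-1}+b_k)u(n-1,k)$ precisely when we take $a_i=b_i=\alpha_i+im$ for all $i\ge 0$, since then $a_{n-1}+b_k=(\alpha_{n-1}+(n-1)m)+(\alpha_k+km)$.

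Next I would check that the boundary data agree. As $b_0=\alpha_0$, Mansour's condition $u(n,0)=\prod_{i=0}^{n-1}(a_i+b_0)$ reads $\prod_{i=0}^{n-1}(\alpha_i+im+\alpha_0)$, which is exactly the value $L_{m,\bar{\boldsymbol{\alpha}}}(n,0)$ computed in the previous theorem, while $u(0,k)=\delta_{0k}$ holds by the definition of $L_{m,\bar{\boldsymbol{\alpha}}}$. The one hypothesis of Theorem \ref{th:mans} that needs attention is $b_i\neq b_j$ for $i\neq j$, i.e.\ $\alpha_i+im\neq\alpha_j+jm$ whenever $i\neq j$; this is the standing nondegeneracy assumption on $\bar{\boldsymbol{\alpha}}$ and $m$, and it is in any case forced by the requirement that the denominators appearing in the claimed explicit formula be nonzero.

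With the hypotheses in place, the explicit formula follows by substituting $a_i=b_i=\alpha_i+im$ into \eqref{E:mex}, giving
\[
L_{m,\bar{\boldsymbol{\alpha}}}(n,k)=\sum_{j=0}^{k}\frac{\prod_{i=0}^{n-1}(\alpha_j+jm+\alpha_i+im)}{\prod_{i=0,\,i\ne j}^{k}(\alpha_j+jm-\alpha_i-im)},
\]
and the recurrence relation follows the same way from \eqref{E:mrec}, since $a_i+b_k=\alpha_i+im+\alpha_k+km$, yielding
\[
L_{m,\bar{\boldsymbol{\alpha}}}(n,k)=\sum_{j=k}^{n}L_{m,\bar{\boldsymbol{\alpha}}}(j-1,k-1)\prod_{i=j}^{n-1}(\alpha_i+im+\alpha_k+km).
\]
The work is thus entirely in setting up the dictionary between \eqref{E:recl} and Theorem \ref{th:mans}; beyond bookkeeping the index shift and recording the nondegeneracy condition there is no real obstacle. (If a self-contained argument is preferred, one could instead insert the definition \eqref{E:lah1} together with the $q\to 1$ explicit forms of $w_{m,\bar{\boldsymbol{\alpha}}}$ and $W_{m,\bar{\boldsymbol{\alpha}}}$ and carry out a Vandermonde-type partial-fraction computation, but invoking Theorem \ref{th:mans} is by far the shortest route.)
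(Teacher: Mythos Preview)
Your proposal is correct and follows exactly the same approach as the paper: the paper's entire proof is the one line ``The proof follows by setting $a_i=b_i=\alpha_i+im$ in \eqref{E:mex} and \eqref{E:mrec}.'' Your write-up is in fact more careful than the original, since you verify the boundary conditions and flag the nondegeneracy hypothesis $\alpha_i+im\neq\alpha_j+jm$ that the paper leaves implicit.
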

\begin{proof}
The proof follows by setting $a_i=b_i=\alpha_i+im$ in \eqref{E:mex} and \eqref{E:mrec}.
\end{proof}
In particular, by setting $\bar{\boldsymbol{\alpha}}=(r, r, \ldots, r)$ we obtain the explicit formula and recurrence relation for $L_{m,r}(n,k)$ as follows:
\begin{corollary} The $r$-Whitney-Lah numbers $L_{m,r}(n,k)$ satisfy the following:
\begin{equation}\label{E:rlahe}
L_{m,r}(n,k)=\frac{1}{m^k k!}\sum_{j=0}^k \binom{k}{j}(-1)^{k-j}\prod_{i=0}^{n-1}(2r+jm+im).
\end{equation}
\begin{equation}
L_{m,r}(n,k)=\sum_{j=k}^n L_{m,r}(j-1,k-1)\prod_{i=j}^{n-1}(2r+km+im).
\end{equation}
\end{corollary}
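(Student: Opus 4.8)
The plan is to specialize Theorem~\ref{th:lah} to the constant vector $\bar{\boldsymbol{\alpha}}=(r,r,\ldots,r)$ and simplify. For the explicit formula~\eqref{E:rlahe}, I would start from
\[
L_{m,\bar{\boldsymbol{\alpha}}}(n,k)=\sum_{j=0}^k \frac{\prod_{i=0}^{n-1}(\alpha_j+jm+\alpha_i+im)}{\prod_{i=0 ,i\ne j}^k (\alpha_j+jm-\alpha_i-im)}
\]
and substitute $\alpha_i=r$ for every $i$. The numerator collapses at once to $\prod_{i=0}^{n-1}(2r+jm+im)$, while each factor of the denominator becomes $(j-i)m$, so the denominator is $m^{k}\prod_{i=0,\,i\ne j}^{k}(j-i)$ (the index $i$ ranges over the $k+1$ values $0,\ldots,k$, with $i=j$ deleted, leaving $k$ factors, hence the power $m^{k}$).

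The one step that needs a little care is evaluating $\prod_{i=0,\,i\ne j}^{k}(j-i)$, and this is the only place the argument is not completely mechanical. I would split it into the ranges $i<j$ and $i>j$: for $i=0,\ldots,j-1$ the factors $j-i$ run through $j,j-1,\ldots,1$, contributing $j!$, and for $i=j+1,\ldots,k$ the factors $j-i$ run through $-1,-2,\ldots,-(k-j)$, contributing $(-1)^{k-j}(k-j)!$. Thus the denominator equals $m^{k}(-1)^{k-j}j!(k-j)!$, and
\[
L_{m,r}(n,k)=\frac{1}{m^{k}}\sum_{j=0}^{k}\frac{(-1)^{k-j}}{j!(k-j)!}\prod_{i=0}^{n-1}(2r+jm+im),
\]
which is exactly~\eqref{E:rlahe} once one writes $\dfrac{1}{j!(k-j)!}=\dfrac{1}{k!}\dbinom{k}{j}$ and notes $(-1)^{-(k-j)}=(-1)^{k-j}$.

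For the recurrence I would take the second displayed identity of Theorem~\ref{th:lah},
\[
L_{m,\bar{\boldsymbol{\alpha}}}(n,k)=\sum_{j=k}^n L_{m,\bar{\boldsymbol{\alpha}}}(j-1,k-1)\prod_{i=j}^{n-1}(\alpha_i+im+\alpha_k+km),
\]
and again set $\alpha_i=r$, so that each factor $\alpha_i+im+\alpha_k+km$ simplifies to $2r+im+km$, yielding the stated recurrence directly. No genuine obstacle appears in either part; the sign-and-factorial bookkeeping in the denominator product above is the only point that warrants attention.
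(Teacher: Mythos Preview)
Your proposal is correct and follows exactly the approach the paper intends: the corollary is introduced with ``by setting $\bar{\boldsymbol{\alpha}}=(r,r,\ldots,r)$'' and no further proof is given, so the specialization of Theorem~\ref{th:lah} together with the routine evaluation of $\prod_{i\ne j}(j-i)m=(-1)^{k-j}m^{k}j!(k-j)!$ that you spell out is precisely what is required. Your write-up in fact supplies more detail than the paper does.
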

Choen and Jung \cite{cheon12} showed that
\begin{equation}\label{E:choe}
L_{m,r}(n,k)=\binom{n}{k}\prod_{i=0}^{n-k-1}(2r+km+im).
\end{equation}
Thus from \eqref{E:rlahe} and \eqref{E:choe} we obtain the following combinatorial identity
\begin{equation}
\frac{1}{m^k k!}\sum_{j=0}^k \binom{k}{j}(-1)^{k-j}\prod_{i=0}^{n-1}(2r+jm+im)=\binom{n}{k}\prod_{i=0}^{n-k-1}(2r+km+im).
\end{equation}
\subsection {Matrix representations }
Let $\mathbf{w}, \mathbf{W}$ and $\mathbf{L}$ denote infinite lower triangular matrices whose $(n,k)$-th entries are
$w_{m,\bar{\boldsymbol{\alpha}}}(n,k)$, $W_{m,\bar{\boldsymbol{\alpha}}}(n,k)$, and $L_{m,\bar{\boldsymbol{\alpha}}}(n,k)$,
respectively. Furthermore, let $\mathbf{D}$ be the infinite diagonal matrix whose $(n,k)$-th entry is $D(n,k)=(-1)^n \delta{nk}$, hence $\mathbf{D}^{-1}=\mathbf{D}$, and $\mathbf{D} \mathbf{D}^{-1}= \mathbf{I}$.  Equation \eqref{E:lah1} can be written in the matrix form
\[
 \mathbf{L}=\mathbf{D}\mathbf{w}\mathbf{D}\mathbf{W}.
\]
El-Desouky  et al. \cite{beih16} showed that $\mathbf{w}^{-1}=\mathbf{W}$, $\mathbf{W}^{-1}=\mathbf{w}$. Thus
\begin{equation*}
\mathbf{L}^{-1}=\mathbf{W}^{-1}\mathbf{D}\mathbf{w}^{-1}\mathbf{D}=\mathbf{w}\mathbf{D}\mathbf{W}\mathbf{D}=
\mathbf{D}\mathbf{D}\mathbf{w}\mathbf{D}\mathbf{W}\mathbf{D}
=\mathbf{D}\mathbf{L}\mathbf{D}.
\end{equation*}

\end{document}